\definecolor{linkcolor}{rgb}{0,0,0.675}%
\newtheorem{theorem}{Theorem}[section]
\newtheorem{lemma}[theorem]{Lemma}
\newtheorem{corollary}[theorem]{Corollary}
\newtheorem{proposition}[theorem]{Proposition}
\theoremstyle{definition}
\newtheorem{example}[theorem]{Example}
\newtheorem{notation}[theorem]{Notations}
\renewcommand*{\P}{\mathbb P}
\newcommand*{\Hom}{\operatorname{Hom}}
\newcommand*{\Ext}{\operatorname{Ext}}
\newcommand*{\Cl}{\operatorname{Cl}}
\newcommand*{\Pic}{\operatorname{Pic}}
\newcommand*{\Db}{\operatorname{D}^{\sf b}}
\newcommand*{\Z}{\mathbb{Z}}
\newcommand*{\Q}{\mathbb{Q}}
\newcommand*{\C}{\mathbb{C}}
\newcommand*{\g}{{\sf g}} 
\renewcommand*{\v}{{\sf ver}} 
\let\op\operatorname
\title{On derived categories of nonminimal Enriques surfaces}
\author{Yonghwa Cho}
\date{}
\address{Department of Mathematical Sciences, KAIST, 291 Daehak-ro, Yuseong-gu, Daejeon 305-701, Korea}
\email{yonghwa.cho@kaist.ac.kr}
\begin{document}
%
%
%
%
	\begin{abstract}
		By Orlov's formula, the derived category of blow up must contain the original variety as a semiorthogonal component. This arises an interesting question: does there exist a variety $X$ such that $\operatorname{D}^{\sf b}(X)$ does not admit an exceptional collection of maximal length, but $\operatorname{D}^{\sf b}(\operatorname{Bl}_{x} X)$ admits such a collection? We give such an example where $X$ is a minimal Enriques surface.
	\end{abstract}
	\maketitle
	\section{Introduction}
		This short note is to give an answer to the question posed in \cite[Remark~3.12]{Vial:Exceptional_NeronSeveriLattice} on the existence of exceptional collections of maximal length in the derived categories of nonminimal Enriques surfaces. In his recent article\,\cite{Vial:Exceptional_NeronSeveriLattice}, Vial proves that an algebraic surface $S$ with $p_g=q=0$ admits numerically exceptional collections of maximal length if and only if one of the following is true:
		\begin{enumerate}
			\item $S$ is minimal and of Kodaira dimension $-\infty$ or $2$;
			\item $S$ is one of the Dolgachev surfaces of type $X_9(2,3)$, $X_9(2,4)$, $X_9(3,3)$, $X_9(2,2,2)$;
			\item $S$ is nonminimal.
		\end{enumerate}
		By this criterion, an (minimal) Enriques surface never has an exceptional collection of maximal length, but there still remains a possibility that its blow up has an exceptional collection of maximal length. Indeed, it turns out that there exist such examples:
		\begin{theorem}[see Theorem~\ref{thm: Main Thm}]\label{thm: Main Thm_Intro}
		There exist an Enriques surface $S'$ such that the blowing up at a general point gives a surface $S$ whose derived category admits a semiorthogonal decomposition of $13$ line bundles together with a triangulated category $\mathcal A$ satisfying $K_0(\mathcal A) = \Z/2\Z$.
		\end{theorem}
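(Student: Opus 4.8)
The plan is to reduce the statement to the construction of $13$ line bundles on $S=\operatorname{Bl}_xS'$ that form a genuine semiorthogonal decomposition $\Db(S)=\langle\mathcal A,L_1,\dots,L_{13}\rangle$, and then to read off $K_0(\mathcal A)$ from numerical data. Since $p_g=q=0$ forces $H^\bullet(\mathcal O_S)=\C$, every line bundle on $S$ is an exceptional object, so line bundles are the natural building blocks and the only real issue is the mutual semiorthogonality. The guiding principle is Vial's result (case (3) of the criterion quoted above): because $S$ is nonminimal, its numerical Grothendieck group $K_0^{\operatorname{num}}(S)$, which has rank $13$, already carries a \emph{numerically} exceptional basis. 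So the first task is purely combinatorial: realize such a basis by honest line bundles, and the second, harder task is to upgrade it to an actual decomposition.

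First I would record the numerical setup. Writing $\pi\colon S\to S'$ with exceptional curve $E$, one has $\operatorname{Num}(S)=\operatorname{Num}(S')\oplus\Z E\cong(U\oplus E_8(-1))\oplus\langle-1\rangle$, a unimodular lattice of rank $10+1=11$, so $K_0^{\operatorname{num}}(S)$ is unimodular of rank $13$. I would then take the $13$ line bundles in the form $L_i=\pi^*M_i\otimes\mathcal O_S(a_iE)$ for suitable $M_i\in\Pic(S')$ and $a_i\in\Z$, and use Riemann--Roch on $S$, namely $\chi(\mathcal O_S(D))=1+\tfrac12 D\cdot(D-K_S)$ with $K_S=\pi^*K_{S'}+E$ and $K_{S'}$ numerically trivial, to arrange the Euler matrix $\bigl(\chi(L_i,L_j)\bigr)$ to be upper triangular with $1$'s on the diagonal. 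This step only involves intersection numbers on the even unimodular lattice $\operatorname{Num}(S)$ and should follow from transcribing Vial's numerically exceptional basis; the freedom in choosing the $M_i$ within a fixed polarization and the integers $a_i$ is exactly what one needs to keep the following vanishings within reach.

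The hard part will be promoting numerical semiorthogonality to genuine vanishing: for $j>i$ I must show $\Ext^\bullet_S(L_j,L_i)=H^\bullet\bigl(S,\pi^*(M_i\otimes M_j^{-1})\otimes\mathcal O_S((a_i-a_j)E)\bigr)=0$ in \emph{every} degree, not merely $\chi=0$. Using $R\pi_*$ and the projection formula (together with the standard computations of $R\pi_*\mathcal O_S(mE)$) I would reduce each such group to cohomology on $S'$ of the difference bundles $N=M_i\otimes M_j^{-1}$, possibly twisted by $K_{S'}$ and by $\mathcal O_E$-contributions along $E$. The vanishing of $H^0$ and of $H^2$ (the latter via Serre duality $H^2(S',N)\cong H^0(S',K_{S'}\otimes N^{-1})^\vee$) is governed by effectivity, so a careful choice of the nef/effective cone handles these. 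The genuine obstacle is $H^1$: on an Enriques surface the canonical bundle is only $2$-torsion, so Kawamata--Viehweg vanishing must be applied separately to $N$ and to $N\otimes K_{S'}$, and $H^1$ can jump precisely when a difference class is represented by a $(-2)$-curve. This is where the hypotheses \emph{general Enriques surface} (no unexpected $(-2)$-curves, prescribed nef cone) and \emph{general point} $x$ enter, and I expect essentially all the work to be in verifying that, for the chosen configuration, none of the finitely many difference classes falls into the bad locus where $H^1$ fails to vanish.

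Granting the vanishings, $\mathcal A:=\langle L_1,\dots,L_{13}\rangle^\perp$ gives the asserted decomposition, and the final computation of $K_0(\mathcal A)$ is clean. By Bloch--Kas--Lieberman and Roitman, $\operatorname{CH}_0(S')=\Z$, so the coniveau filtration gives $K_0(S')\cong\Z^{12}\oplus\Z/2\Z$ with the torsion generated by $[\mathcal O_{S'}]-[K_{S'}]$; Orlov's blow-up formula then yields $K_0(S)\cong\Z^{13}\oplus\Z/2\Z$. Because the Euler matrix of $(L_i)$ is unipotent, its determinant is $1$, so the classes $[L_i]$ span a unimodular full-rank sublattice of the unimodular lattice $K_0^{\operatorname{num}}(S)\cong\Z^{13}$, hence a basis of it. Therefore the $[L_i]$ generate the free part of $K_0(S)$, and from $K_0(S)=K_0(\mathcal A)\oplus\bigoplus_{i}\Z[L_i]$ we conclude $K_0(\mathcal A)=\Z/2\Z$, the surviving image of the $2$-torsion canonical class.
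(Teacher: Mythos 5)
Your endgame (thirteen exceptional line bundles, $\mathcal A$ their right orthogonal, $K_0(S)\cong\Z^{13}\oplus\Z/2\Z$ forcing $K_0(\mathcal A)=\Z/2\Z$) agrees with the paper, but the middle of your argument has two concrete defects, and they sit exactly where the content of the theorem lies. First, your treatment of $H^1$ is both impossible and unnecessary. Every difference class $N$ you must kill satisfies $\chi(N)=0$ by numerical exceptionality; writing $N=K_S+A$, Riemann--Roch gives $\chi(N)=1+\tfrac12(K_S\cdot A+A^2)$, and since $K_S$ is numerically the exceptional $(-1)$-curve $E$, any nef and big $A$ would give $K_S\cdot A\geq 0$ and $A^2>0$, hence $\chi(N)\geq 2$ --- so none of the classes you need is ever of Kawamata--Viehweg type, and the same count shows the twist by the $2$-torsion class $K_{S'}$ does not help. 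Fortunately $H^1$ costs nothing: from $\chi=0$ together with $h^0=h^2=0$ one gets $h^1=0$ automatically, and $h^2$ is an $h^0$ by Serre duality. This is precisely the paper's reduction (the unnumbered corollary to Proposition~\ref{prop: Intersection theory} and the paragraph following it): the entire problem is $h^0$-vanishing, which is the step you treat as routine.

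Second --- and this is the real gap --- the $h^0$-vanishing that you delegate to ``a careful choice of the nef/effective cone'' plus genericity of $(S',x)$ is the whole theorem, and nothing in your proposal can certify it. Whether some numerically exceptional collection of maximal length on a nonminimal Enriques surface lifts to a genuine one was exactly the open question from Vial's Remark~3.12; no cone-theoretic or vanishing-theorem argument is known, which is why the paper proceeds quite differently. It constructs $S$ as a $\Q$-Gorenstein smoothing of the surface obtained by contracting two $(-4)$-curves on a rational elliptic surface, transfers each line bundle to $S$ through Hacking's modification of the total family, bounds $h^0(D^\g)$ by $h^0$ on the reducible central fiber via \eqref{eq: Semicontinuity on h^0}, converts each required vanishing into the non-existence of a plane curve of prescribed degree with prescribed multiplicities at twelve points (e.g.\ no degree-$16$ curve with multiplicity $4$ at nine points and $6$ at three others), verifies these by Macaulay2 for an explicit pair of nodal cubics (Proposition~\ref{prop: Computable Subcollection}), and only then spreads the conclusion to general $h_1,h_2$ by semicontinuity (Lemma~\ref{lem: Perturbation of h_i}) and to the remaining index pairs by a Galois-permutation symmetry (Lemma~\ref{lem: Permutation lemma}). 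Example~\ref{eg: h^0 depends on configuration} shows the relevant $h^0$'s genuinely jump on positive-codimension loci (colinearity conditions), so ``general'' only helps after one good configuration has been exhibited --- and exhibiting one is the computational core your plan omits. As written, your proposal is a correct numerical frame plus an unpaid IOU for exactly the vanishings that constitute the result.
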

		By the formula due to Orlov\,\cite{Orlov:ProjBundle}, we have two very different-looking semiorthogonal decompositions
		\[
			\Db(S) = \langle \mathcal O_{E}(1),\ \Db(S') \rangle = \langle \mathcal A, E_1,\,\ldots,\,E_{13} \rangle,
		\]
		where $E$ is the exceptional divisor of $S \to S'$. It seems a very intriguing question to ask how these semiorthogonal components can be compared.
		
		In Section~\ref{sec: Prelim} we briefly explain notions related to the exceptional collections on algebraic surfaces. Section~\ref{sec: Construction} deals with the construction method of the nonminimal Enriques surfaces which appear in Theorem~\ref{thm: Main Thm_Intro} and the technical parts, including proofs, are discussed in Section~\ref{sec: Technical Proof}. The theoretical backgrounds on Sections~\ref{sec: Construction}--\ref{sec: Technical Proof} are developed in \cite{ChoLee:ExcColl_Dolgachev}, but these have been applied to simpler setup in this article. For this reason, we expect that this example is more comprehensible than the one in \cite{ChoLee:ExcColl_Dolgachev}.
		\begin{notation}\ 
			\begin{enumerate}[topsep=0pt]
				\item Everything is defined over the field of complex numbers, except $Y_\Q$ in the proof of Lemma~\ref{lem: Permutation lemma}.
				\item Let $\mu_r = \langle \zeta_r \rangle$ be the multiplicative group which is generated by the primitive $r$\textsuperscript{th} root of unity. The group action
				\[
					\mu_r \times \C^n \to \C^n,\quad \zeta_r \cdot (x_1,\ldots,x_n) = (\zeta_r^{a_1} x_1,\ldots, \zeta_r^{a_n} x_n)
				\]
				defines the quotient space $\C^n / \mu_n$, which we will denote by $\C^n / \frac{1}{r}(a_1,\ldots,a_n)$.
				\item For a scheme $T$ of finite type over $\C$ and a point $P \in T$, $(P \in T)$ denotes the analytic germ.
				\item Except stated otherwise, the equality between divisors indicates the linear equivalence relation. Also, we say $D$ is effective if $D$ is linearly equivalent to an effective divisor, or equivalently, $h^0(D) > 0$.
			\end{enumerate}
		\end{notation}

	\section{Preliminaries}\label{sec: Prelim}
		Let $X$ be a nonsingular projective variety over $\C$ and let $\Db(X)$ be the bounded derived category of coherent sheaves on $X$. An \emph{exceptional collection} is an ordered collection of objects $E_1,\ldots,E_k \in \Db(X)$ satisfying the following conditions:
		\[
			\Hom_{\Db(X)}(E_i , E_j[p]) \left\{
				\begin{array}{ll}
					\C & i=j,\ p=0 \\
					0 & i=j,\ p \neq 0 \\
					0 & i>j
				\end{array}
			\right.
		\]
		This notion is motivated from the decomposition problem in derived categories. In general, a triangulated category $\mathcal T$ admits a \emph{semiorthogonal decomposition} $\langle \mathcal T_1,\ldots,\mathcal T_k \rangle$ if
		\begin{enumerate}
			\item $\mathcal T_1,\ldots,\mathcal T_k$ are full triangulated subcategories of $\mathcal T$;
			\item the smallest full triangulated subcategory containing $\mathcal T_1,\ldots,\mathcal T_k$ is $\mathcal T$;
			\item $\Hom_{\mathcal T}( T_i,T_j ) = 0$ for each $i>j$ and $T_i\in \mathcal T_i$, $T_j \in \mathcal T_j$.
		\end{enumerate}
		If $\mathcal T = \Db(X)$ and $E_1,\ldots,E_k$ is an exceptional collection, then there exists a semiorthogonal decomposition
		\[
			\mathcal T = \langle \mathcal A, E_1,\ldots, E_k \rangle
		\]
		where $\mathcal A = \langle E_1,\ldots, E_k \rangle^\perp$ is the full triangulated subcategory generated by the objects
		\[
			\{ A \in \Db(X) : \Hom_{\Db(X)}(E_i,A[p]) = 0,\ i=1,\ldots,k,\ p \in \Z \}
		\]
		In practical situations, the $\Hom$-groups in the derived category has a geometric interpretation. Indeed, for any coherent sheaf $\mathcal F$ on $X$, we can regard $\mathcal F$ as an objects in $\Db(X)$ by considering the complex $(\ldots \to 0 \to \mathcal F \to 0 \to\ldots)$ concentrated at degree zero, then for coherent sheaves $\mathcal F_1$ and $\mathcal F_2$ one has
		\begin{equation}\label{eq: Hom in Derived categories}
			\Hom_{\Db(X)} (\mathcal F_1,\mathcal F_2[p]) \simeq \Ext^p_X(\mathcal F_1,\mathcal F_2).
		\end{equation}
		An exceptional object in $\Db(X)$ contributes a $\Z$-direct summand in the group $K_0(X)$. Thus, if $E_1,\ldots,E_k$ is an exceptional collection, then $K_0(X) = \Z^{\oplus k} \oplus K_0(\mathcal A)$ where $\mathcal A = \langle E_1,\ldots, E_k \rangle^\perp$. For this reason, the length of an exceptional collection is bounded by the rank of $K_0(X)$. If $X$ is an algebraic surface with $\op{CH}^2(X) \simeq \Z$, then it is known that $K_0(X) \simeq \Z^{\oplus 2} \oplus \Pic X$\,(see \cite[Lemma~2.7]{GalkinShinder:Beauville}). Hence, for $S$ as in Theorem~\ref{thm: Main Thm_Intro},
		\[
			K_0(S) \simeq \Z^{\oplus 2} \oplus \Pic S \simeq \Z^{\oplus 13} \oplus \Z/2\Z,
		\]
		thus the length of any exceptional collection does not exceed $13$. Also, once we establish such a collection, say $E_1,\ldots,E_{13}$, then the orthogonal category $\mathcal A:= \langle E_1,\ldots,E_{13} \rangle^\perp$ must satisfy $K_0(\mathcal A) = \Z/2\Z$, and in particular, $\mathcal A \not\simeq 0$.
	\section{Construction method}\label{sec: Construction}
		We begin with explaining the method to construct Enriques surfaces by $\Q$-Gorenstein smoothing. The method is originally developed in \cite{LeePark:SimplyConnected}. Also, the paper contains the construction of Enriques surfaces as an example\,(see \cite[Example~2]{LeePark:SimplyConnected}). Here, we give a minimal description to establish the notations for the future use. Let $h_1,\,h_2 \in S:= \C[x,y,z]$ be homogeneous cubics which define nodal cubics on the plane. Assume $h_1 \cap h_2$ are nine distinct points. Then the pencil $\mathfrak p := \lvert \lambda_1 h_1 + \lambda_2 h_2 \rvert$ defines $\varphi_\mathfrak p \colon \P^2 \dashrightarrow \P^1$ whose resolution of indeterminacy is the rational elliptic fibration $f' \colon Y' \to \P^1$. There are two special fibers which corresponds to $(h_i=0)$ for $i=1,2$. Let $Y \to Y'$ be the blow up at the nodal points of $(h_i=0)$. It ends up with the rational elliptic surface $f\colon Y \to \P^1$ with the following dual graph of divisors.
	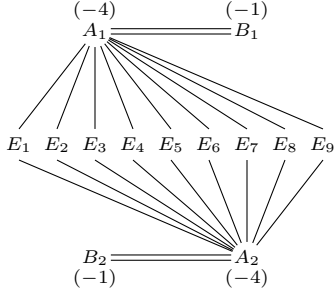
\begin{figure}[h!]
		\centering
		\begin{tikzpicture}
			\tikzset{shape=circle, inner sep=0pt}
			\draw(-1,1.5) node[anchor=center] (A1){$\scriptstyle A_1$};
			\draw(1,1.5) node[anchor=center] (A2){$\scriptstyle B_1$};
			\draw(1,-1.5) node[anchor=center] (B1){$\scriptstyle A_2$};
			\draw(-1,-1.5) node[anchor=center] (B2){$\scriptstyle B_2$};
			\draw[-] ([yshift=1pt] A1.east) -- ([yshift=1pt] A2.west);
			\draw[-] ([yshift=-1pt] A1.east) -- ([yshift=-1pt] A2.west);
			\draw[-] ([yshift=1pt] B1.west) -- ([yshift=1pt] B2.east);
			\draw[-] ([yshift=-1pt] B1.west) -- ([yshift=-1pt] B2.east);
			\node[yshift=2pt] at (A1.north) {$\scriptstyle (-4)$};
			\node[yshift=2pt] at (A2.north) {$\scriptstyle (-1)$};
			\node[yshift=-2pt] at (B1.south) {$\scriptstyle (-4)$};
			\node[yshift=-2pt] at (B2.south) {$\scriptstyle (-1)$};
			\foreach \x in {1,...,9}{
				\draw(0.5*\x-2.5,0) node[anchor=center] (E\x){$\scriptstyle E_\x$};
				\draw[-]	(E\x.north) -- (A1);
				\draw[-]	(E\x.south) -- (B1);
			}%
%
		\end{tikzpicture}\vskip-0.8\baselineskip%
		\caption{\label{fig: Minimal Configuration}Divisors on $Y$ and their intersections}%
	\end{figure}
	\par Here, $A_i$ is the proper transform of $(h_i=0)$ along $Y \to \P^2$, and $B_i$ is the exceptional divisor obtained by the blowing up $Y \to Y'$. Also, $E_1,\ldots,E_9$ are the divisors which appear in the blowing up $Y' \to \P^2$. An edge between two nodes implies that corresponding divisors meets with intersection number $1$. For example, it can be read $(A_i.B_i)=2$ from the graph. From $Y$, we can produce Enriques surfaces as follows.
	\begin{enumerate}[fullwidth, listparindent=\parindent, label=\bf{}Step~\arabic{enumi}., ref=\arabic{enumi}]
		\item Contract $A_1$ and $A_2$ to gain a singular surface $X$ with two $\frac 14 (1,1)$ singularities.
		\item Consider a $\Q$-Gorenstein smoothing $\mathcal X / (0 \in \Delta)$ of $X$, \textit{i.e.} a proper flat morphism $\mathcal X \to (0 \in \Delta)$ such that $\mathcal X_0 \simeq X$ and $\mathcal X_t$ is smooth for general $t \in \Delta \setminus\{0\}$.
		\item For general $t \in \Delta \setminus\{0\}$, $S := \mathcal X_t$ is an Enriques surface.
	\end{enumerate}
	One possible way to perform a blow up $S$ is to blow up the surface $Y$ in advance, and proceeds to Steps 1--3 described above. Hence, we add:
	\begin{enumerate}[fullwidth, listparindent=\parindent, label=\bf{}Step~\arabic{enumi}., ref=\arabic{enumi}]\setcounter{enumi}{-1}
		\item Blow up a point in $Y \setminus( A_1\cup A_2 \cup B_1 \cup B_2 \cup E_1 \cup \ldots \cup E_9 )$.
	\end{enumerate}
	By a slight abuse of notations, we keep call the resulting surface $Y$ and the respective divisors $A_1,\,A_2,\,B_1,\,B_2,\,\allowbreak E_1,\,\ldots,\,E_9$. Also, let $E_0$ be the exceptional divisor produced in Step~0.
	\begin{notation}
		Let $\pi \colon Y \to X$ be the contraction of $A_1,\,A_2$, let $p \colon Y \to \P^2$ be the blow down morphism, and let $H \subset \P^2$ be a line. Then, $\Pic Y$ is the free abelian group generated by
		\[
			\bigl\{ p^*H,\, E_0,\, E_1,\, E_2,\, \ldots,\, E_9,\, B_1,\, B_2 \bigr\}.
		\]
		Also define divisors $Q := p^*(2H)$, $\ell_i := p^*H - E_i$ for $i=1,\ldots,9$, and
		\[
			D_i = \left\{
				\begin{array}{ll}
					0 & i=0 \\
					- \ell_i + E_0 + B_1 & 1 \leq i \leq 9 \\
					-B_1 + E_0 & i=10 \\
					-Q + 3E_0 + 2B_1 & i= 11 \\
					2D_{11} & i = 12\,.
				\end{array}
			\right.
		\]
		The push forward along $\pi$ defines the $\Q$-Cartier Weil divisors on $X$. 
	\end{notation}
	Our main claim in this article is the following.
	\begin{theorem}\label{thm: Main Thm}
		Assume $h_1,h_2$ are general. There exist divisors $D_i^\g \in \Pic S$\,($i=0,\ldots,12$), which correspond to $D_i$, such that
		\[
			\mathcal O_S(D_0^\g),\, \mathcal O_S(D_1^\g),\, \ldots,\, \mathcal O_S(D_{12}^\g)
		\]
		is an exceptional collection in $\Db(S)$.
	\end{theorem}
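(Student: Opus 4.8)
The plan is to separate the statement into a trivial part (exceptionality of each individual bundle) and a substantial part (semiorthogonality), reduce the latter to a single family of cohomology vanishings, and then prove those vanishings by degenerating to the singular central fibre $X$ and computing on the resolution $Y$. Exceptionality is automatic: since $S$ is the blow-up of an Enriques surface it has $p_g=q=0$, so $H^1(\mathcal O_S)=H^2(\mathcal O_S)=0$ and $\Hom(\mathcal O_S(D_i^\g),\mathcal O_S(D_i^\g)[p])=H^p(\mathcal O_S)$ equals $\C$ for $p=0$ and vanishes otherwise. By \eqref{eq: Hom in Derived categories} the only remaining requirement is the acyclicity
\[
	H^p\bigl(S,\ \mathcal O_S(D_j^\g-D_i^\g)\bigr)=0\qquad\text{for all }p\text{ and all }i>j,
\]
one such vanishing for each of the $\binom{13}{2}$ ordered pairs.

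Next I would supply the classes $D_i^\g$ and reduce each vanishing to the central fibre. The point is that the $\Q$-Cartier Weil divisor $\pi_*D_i$ on $X$ extends to a Weil divisor $\mathcal D_i$ on the total space $\mathcal X$ of the $\Q$-Gorenstein smoothing; restricting to the smooth fibre $S=\mathcal X_t$ (where every Weil divisor is Cartier) produces the line bundle $\mathcal O_S(D_i^\g)$. Whether $\pi_*D_i$ extends is governed entirely by the local class groups of the two $\tfrac{1}{4}(1,1)$ points, and the coefficients chosen in the definition of the $D_i$ are designed precisely so that these local obstructions vanish; this is the mechanism of \cite{ChoLee:ExcColl_Dolgachev}, specialised to the configuration of Figure~\ref{fig: Minimal Configuration}. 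Taking $\mathcal D_{ij}=\mathcal D_j-\mathcal D_i$, the divisorial sheaf $\mathcal O_{\mathcal X}(\mathcal D_{ij})$ is flat over $\Delta$ with reflexive fibres, equal to $\mathcal O_S(D_j^\g-D_i^\g)$ for general $t$ and to $\mathcal O_X(\pi_*(D_j-D_i))$ over $0$. Upper semicontinuity of cohomology then gives
\[
	h^p\bigl(S,\mathcal O_S(D_j^\g-D_i^\g)\bigr)\le h^p\bigl(X,\mathcal O_X(\pi_*(D_j-D_i))\bigr),
\]
so it suffices to prove the vanishing on the central fibre $X$.

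Since the singularities of $X$ are cyclic quotient, hence rational, I would transfer the computation to the resolution $\pi\colon Y\to X$ of the two $(-4)$-curves $A_1,A_2$. Setting $\hat W_{ij}=\lfloor\pi^*\pi_*(D_j-D_i)\rfloor$, which differs from $D_j-D_i$ by the combination $\sum_k\lfloor (D_j-D_i)\cdot A_k/4\rfloor A_k$ read off from the intersection numbers, one finds $\hat W_{ij}\cdot A_k\equiv (D_j-D_i)\cdot A_k\pmod 4$ lies in $\{0,1,2,3\}$; thus $R^1\pi_*\mathcal O_Y(\hat W_{ij})=0$ and $\pi_*\mathcal O_Y(\hat W_{ij})=\mathcal O_X(\pi_*(D_j-D_i))$, so $H^p(X,\mathcal O_X(\pi_*(D_j-D_i)))\simeq H^p(Y,\mathcal O_Y(\hat W_{ij}))$. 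On the rational surface $Y$ these are ordinary cohomology groups. A Riemann--Roch computation (equivalent to the numerical exceptionality underlying Vial's criterion, and consistent with $\chi$ being constant in the flat family) gives $\chi(\mathcal O_Y(\hat W_{ij}))=0$ for every $i>j$, so it remains only to show $h^0=h^2=0$, that is, that neither $\hat W_{ij}$ nor $K_Y-\hat W_{ij}$ is effective. I would settle this by intersecting against the elliptic fibration and the distinguished curves $A_k,B_k,E_0,\dots,E_9$, using the explicit effective cone of $Y$ and, crucially, the hypothesis that $h_1,h_2$ are general so that the systems $\lvert p^*(dH)-\sum E_i\rvert$ attain their expected (empty) dimension.

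The main obstacle will be carrying out these non-effectivity checks uniformly across all ordered pairs, and in particular the borderline cases involving the high-degree classes $D_{11}=-Q+3E_0+2B_1$ and $D_{12}=2D_{11}$, where $\chi=0$ leaves no numerical slack and one must rule out sporadic effective classes by hand. Here the symmetry of the nine base points is essential: the differences within the parallel family $D_1,\dots,D_9$ are simply $E_j-E_i$, and the Permutation lemma (Lemma~\ref{lem: Permutation lemma}) lets me collapse these to a few representatives. I expect the rational model $Y_\Q$ appearing in its proof to do the decisive work, since effectivity is a closed condition that can be decided on a single explicit arithmetic member of the family and then propagated to general $h_1,h_2$ by semicontinuity.
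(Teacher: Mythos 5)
Your skeleton is close to the paper's, but the degeneration mechanism you choose is genuinely different, and this is worth flagging. The paper does \emph{not} extend $\pi_*D_i$ as a Weil divisor on the singular total space $\mathcal X$; it explicitly notes that some of the needed classes are Weil but not Cartier on $X$, and instead follows Hacking's birational modification $\tilde{\mathcal X}\to\mathcal X$, whose central fibre is the reducible surface $Y\cup W_1\cup W_2$ with $W_i\simeq\P^2$ glued to $Y$ along $A_i$ (a conic in $W_i$). There everything is an honest \emph{line} bundle: one glues $\mathcal O_Y(D)$, $\mathcal O_{W_1}(d_1)$, $\mathcal O_{W_2}(d_2)$ with $2d_i=(D.A_i)$, checks the glued bundle is exceptional, deforms it over $\Delta$, and gets the bound (\ref{eq: Semicontinuity on h^0}) from a short exact sequence. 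Your route — reflexive divisorial sheaves on $\mathcal X$, flatness over the disk, reflexive restriction to $X$, then round-down on the resolution with $R^1\pi_*=0$ because $\hat W_{ij}$ is $\pi$-nef over rational singularities — can plausibly be pushed through (torsion-freeness gives flatness over the DVR, the reflexive hull differs in dimension $0$ so $h^0$ and $h^2$ comparisons survive, and $\pi$-nefness does give the direct-image statements), but each of these steps is asserted rather than proved, and your stated justification for $R^1\pi_*=0$ is not an argument. What Hacking's construction buys is precisely the avoidance of this reflexive-sheaf bookkeeping; the two resulting bounds on $h^0$ are essentially interchangeable, since $\hat W_{ij}$ and the paper's representatives differ only by elements of $\langle A_1,A_2\rangle$, which do not change $D^\g$.

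The genuine gap is in your endgame. Your primary plan — settle the non-effectivity checks ``using the explicit effective cone of $Y$'' and genericity so the linear systems attain their expected empty dimension — cannot work, and the paper is structured around exactly this failure. The nine points $p(E_1),\ldots,p(E_9)$ are the base points of a cubic pencil, hence in highly special position, so expected-dimension genericity arguments do not apply; and $Y$, a rational elliptic surface blown up at a further point, has an effective cone that is neither finitely generated nor ``explicit''. Example~\ref{eg: h^0 depends on configuration} is the paper's warning: already $h^0$ of $(p^*H-E_9-E_0-B_1)^\g$ hinges on a collinearity condition, and for $p^*(16H)-4\sum_{i\le 9}E_i-6E_0-6B_1-6B_2$ one has $\chi=0$ with $153$ coefficients against $153$ conditions, whose independence is exactly the assertion to be proved — no cone-theoretic shortcut is known. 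Your fallback (decide vanishing at one explicit arithmetic member and propagate to general $h_1,h_2$ by semicontinuity, using the permutation symmetry to collapse the $E_i$-asymmetric pairs) is in fact the paper's method — Lemmas~\ref{lem: Perturbation of h_i} and \ref{lem: Permutation lemma} plus Proposition~\ref{prop: Computable Subcollection} — but you leave the decisive verification unspecified: in the paper it is a Macaulay2 computation, feasible only because $h_1,h_2$, their nodes, and $p(E_0)$ are chosen rational while the nine base points form Galois orbits, so the relevant ideals are definable over $\Q$ even though the points are not. You also slightly misassign the roles: $Y_\Q$ serves the symmetry $h^p(D+E_i)=h^p(D+E_j)$ in Lemma~\ref{lem: Permutation lemma}, while propagation to general $h_1,h_2$ is the rank-semicontinuity of Lemma~\ref{lem: Perturbation of h_i}. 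As a plan your proposal converges to the right reduction, but the step carrying all the content — actually proving $h^0(D)=0$ for the listed classes — is left to a method that would fail, with the correct method mentioned only as an expectation.
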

	\section{The proof}\label{sec: Technical Proof}
	Our aim is to find divisors on $S$ which are comparable to the divisors on $X$. One of the natural attempts is to find a line bundle $\mathcal L \in \Pic \mathcal X$ so that the line bundles $\mathcal L\big\vert_S$ and $\mathcal L\big\vert_X$ share some information through $\mathcal L$. Unfortunately, this cannot be done for some line bundles on $S$. 
	\begin{proposition}\label{prop: Hacking Relative Homology Seq}
		There exists a short exact sequence
		\[
			0 \to \Pic S \to \Cl X \to H_1(M_1,\Z) \oplus H_1(M_2,\Z),
		\]
		where $M_i$ is the Milnor fiber of the smoothing $(P_i \in \mathcal X) / (0 \in \Delta)$. In particular, the image of $\Pic S \to \Cl X$ is exactly the set of Weil divisors $D_X$ on $X$ such that
		\[
			( D. A_i) \equiv 0\ (\op{mod} 2)\quad i=1,2
		\]
		where $ D \in \Pic Y$ is a proper transform of $D_X$ along $\pi$.
	\end{proposition}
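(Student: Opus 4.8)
The plan is to realize the sequence as part of the long exact cohomology sequence of the pair $(S,S^\circ)$, where $S^\circ\subset S$ is the complement of the Milnor fibres, and then to identify its three terms with $\Pic S$, $\Cl X$ and $H_1(M_1,\Z)\oplus H_1(M_2,\Z)$. To set up the model I would choose small balls around $P_1,P_2$ in $\mathcal X$; their intersections with $S$ are the Milnor fibres $M_i$, with links $L_i:=\partial M_i$. Applying Ehresmann's theorem to the submersion $\mathcal X\setminus\{P_1,P_2\}\to(0\in\Delta)$ shows that $S^\circ:=S\setminus(M_1\cup M_2)$ is diffeomorphic to the corresponding truncation of $X$, hence homotopy equivalent to the smooth locus $X\setminus\{P_1,P_2\}$.

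The argument then rests on three identifications. As $S$ is a one-point blow-up of an Enriques surface, $h^{0,1}(S)=h^{0,2}(S)=0$, so the exponential sequence gives $\Pic S\cong H^2(S,\Z)$. For the class group I would use the resolution $\pi\colon Y\to X$: since $X\setminus\{P_1,P_2\}=Y\setminus(A_1\cup A_2)$, the Gysin sequence of this open immersion reads
\[
H^0(A_1,\Z)\oplus H^0(A_2,\Z)\xrightarrow{\ \gamma\ }H^2(Y,\Z)\to H^2\bigl(X\setminus\{P_1,P_2\},\Z\bigr)\to H^1(A_1,\Z)\oplus H^1(A_2,\Z)=0,
\]
where $\gamma$ sends the two generators to $[A_1]$ and $[A_2]$; as $H^2(Y,\Z)=\Pic Y$, this yields $H^2(X\setminus\{P_1,P_2\},\Z)\cong\Pic Y/\langle A_1,A_2\rangle=\Cl X$. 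Finally, $\tfrac14(1,1)=\tfrac{1}{2^2}(1,2\cdot1-1)$ is a Wahl singularity, so each $M_i$ is a rational homology ball with $\pi_1(M_i)=\Z/2$; hence $H_2(M_i,\Z)=0$ and $H_1(M_i,\Z)=\Z/2$.

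With these in hand I would run the sequence. Excision gives $H^k(S,S^\circ)\cong\bigoplus_iH^k(M_i,L_i)$, and Lefschetz duality gives $H^k(M_i,L_i)\cong H_{4-k}(M_i,\Z)$; therefore $H^2(S,S^\circ)\cong\bigoplus_iH_2(M_i,\Z)=0$ while $H^3(S,S^\circ)\cong\bigoplus_iH_1(M_i,\Z)$. The relevant segment of the long exact sequence of the pair,
\[
0=H^2(S,S^\circ)\to H^2(S,\Z)\xrightarrow{\ \mathrm{sp}\ }H^2(S^\circ,\Z)\xrightarrow{\ \delta\ }\textstyle\bigoplus_iH_1(M_i,\Z),
\]
then becomes, after the identifications above, exactly the asserted exact sequence, with $\mathrm{sp}$ the specialization map $L\mapsto c_1(L)|_{S^\circ}$.

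The remaining, and I expect hardest, task is to identify $\delta$ with the intersection pairing, which is what proves the \emph{in particular} clause. By naturality $\delta$ factors through restriction to the links, $H^2(S^\circ,\Z)\to\bigoplus_iH^2(L_i,\Z)$, followed by the local connecting maps $H^2(L_i,\Z)\to H^3(M_i,L_i,\Z)$. Here $L_i$ is the lens space $L(4,1)$, so $H^2(L_i,\Z)\cong\Z/4$ is the local class group $\Cl(P_i\in X)$, and the restriction $\Cl X\to\Cl(P_i\in X)$ takes a Weil divisor to its local class, computed as $D\mapsto(D\cdot A_i)\bmod 4$ by intersection with the $(-4)$-curve $A_i$; the local connecting map $\Z/4\to\Z/2$ is the reduction induced by $H_1(L_i,\Z)=\Z/4\twoheadrightarrow H_1(M_i,\Z)=\Z/2$. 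Composing gives $\delta(D)=\bigl((D\cdot A_1)\bmod 2,\,(D\cdot A_2)\bmod 2\bigr)$, which is well defined on $\Cl X$ because replacing the lift $D\in\Pic Y$ by $D+c_1A_1+c_2A_2$ alters $(D\cdot A_i)$ by $c_iA_i^2=-4c_i\equiv0\pmod 2$, using $A_1\cdot A_2=0$. Hence $\operatorname{im}(\mathrm{sp})=\ker\delta$ is precisely the set of classes with $(D\cdot A_i)\equiv0\pmod 2$ for $i=1,2$. The crux is matching this purely topological $\delta$ with the arithmetic of the local class group of the quotient singularity; the essential inputs are the Wahl computation $H_1(M_i,\Z)=\Z/2$ and the fact that the surjection $\Cl(P_i\in X)=\Z/4\twoheadrightarrow H_1(M_i,\Z)=\Z/2$ is the one induced by the smoothing, after which the exact sequence falls out formally.
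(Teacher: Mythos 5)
Your proposal is correct, and it is in substance the very argument the paper invokes: the paper's own ``proof'' is just a citation to \cite{ChoLee:ExcColl_Dolgachev}*{\textsection 3.1}, whose reasoning (following Hacking) is exactly this specialization argument --- the long exact sequence of the pair comparing $\Pic S\cong H^2(S,\Z)$ with $H^2(S^\circ,\Z)\cong H^2(X\setminus\{P_1,P_2\},\Z)\cong \Pic Y/\langle A_1,A_2\rangle=\Cl X$, the Wahl computation $H_1(M_i,\Z)=\Z/2$, $H_2(M_i,\Z)=0$, and the identification of the connecting map through the links $L(4,1)$ with $D\mapsto (D.A_i)\bmod 2$. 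All the key inputs you use (Gysin sequence, Lefschetz duality, surjectivity of $H_1(L_i)\to H_1(M_i)$, well-definedness on $\Cl X$ via $A_i^2=-4$ and $A_1.A_2=0$) check out, so there is nothing further to flag.
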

	\begin{proof}
		For the proof, we refer to the arguments in \cite[\textsection 3.1]{ChoLee:ExcColl_Dolgachev}.
	\end{proof}
	Indeed, we have divisors on $Y$ which satisfies $(D.A_i) \equiv 0\ (\op{mod}2)$ but $\pi_*D$ is not Cartier, thus it is impossible to find $\mathcal L \in \Pic \mathcal X$ such that $\mathcal L \big\vert_X = \mathcal O(\pi_*D)$. We need a workaround, which modifies the total space $\mathcal X$ so that it is able to assign a line bundle on modified family $\tilde{\mathcal X}$ to each line bundle on $S$. This birational modification trick is developed in \cite[\textsection 3]{Hacking:ExceptionalVectorBundle}. There are two singularities, say $P_1,\, P_2$ in $X$, and these are isomorphic to $( 0 \in \C_{u,v}^2 / \frac 14(1,1) )$. By the change of variables $x = u^2$, $y = v^2$, $z = uv$, we get
	\[
		(P_i \in X) \simeq ( 0 \in (xy = z^2)) \subset (0 \in \C_{x,y,z}^3 / \frac12 (1,1,1)).
	\]
	The versal $\Q$-Gorenstein deformation is given by
	\[
		\mathcal X^\v := (xy = z^2 + t) \subset \C^3_{x,y,z} / \frac 12 (1,1,1) \times \Delta_t^\v,
	\]
	where $\Delta_t^\v \subset \C$ is a small complex disk centered at the origin. Locally, the ambient space $\C^3_{x,y,z} / \frac12(1,1,1) \times \Delta_t^\v$ can be identified to the toric variety associated with the fan $\Sigma$ generated by the standard basis of $\Z^4$ inside the lattice $N = \Z^4 + \Z \cdot \frac 12 (1,1,1,2)$. Let $\tilde \Sigma$ be the fan obtained by adding the ray $\Z \cdot \frac 12(1,1,1,2)$ to $\Sigma$. The resulting toric variety $\tilde \C$ admits the birational morphism $\Phi' \colon \tilde \C \to \C_{x,y,z}^3 / \frac12(1,1,1) \times \Delta_t^\v$. Let $\tilde {\mathcal X}^\v$ be the proper transform of $\mathcal X^\v$ and let $\Phi^\v \colon \tilde{\mathcal X}^\v \to \mathcal X^\v$ be the birational morphism induced by $\Phi'$.
	\begin{proposition}[cf. {\cite[\textsection3]{Hacking:ExceptionalVectorBundle}}]
		Let $\Phi \colon \tilde{\mathcal X} \to \mathcal X$ be the pullback of $\Phi^\v$ along $\mathcal X / (0 \in \Delta) \to \mathcal X^\v / (0 \in \Delta^\v)$. Then, $\tilde{\mathcal X}$ satisfies the following properties;
		\begin{enumerate}
			\item Over $P_i$, $W_i := \Phi^{-1}(P_i)$ is a projective plane and $\Phi$ is an isomorphism outside $\{P_1,\, P_2\}$.
			\item The central fiber over $0 \in \Delta$ is the union $Y \cup W_1 \cup W_2$, where $Y$ is the rational elliptic surface introduced above, and the scheme-theoretic intersection $W_i \cap Y$ is realized as $A_i$ in $Y$, while it is a conic in $W_i$.
		\end{enumerate}
	\end{proposition}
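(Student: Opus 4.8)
The plan is to treat the statement as a local assertion in a formal (or étale) neighbourhood of each of the two isomorphic points $P_1,P_2$. Since $\Phi$ is by definition the pullback of $\Phi^\v$, I would analyse $\Phi^\v$ and the proper transform $\tilde{\mathcal X}^\v$ of $\mathcal X^\v = (xy = z^2 + t)$ once, near a single singularity, and only afterwards transport the conclusion to $\tilde{\mathcal X}$ along the base change $\Delta \to \Delta^\v$. Everything then reduces to the toric combinatorics of the star subdivision $\Phi'$ adding the ray $v = \frac12(1,1,1,2)$, together with a single computation of the initial form of $xy - z^2 - t$ along the new divisor.

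First I would record that $\{v, e_2, e_3, e_4\}$ is a $\Z$-basis of $N$ (because $e_1 = 2v - e_2 - e_3 - 2e_4$), so that the chart $U_{\langle v, e_2, e_3, e_4\rangle}$ is smooth, as are the two charts obtained by replacing $e_2$ or $e_3$ by $v$, while the cone $\langle e_1, e_2, e_3, v\rangle$ carries an index-two singularity. The exceptional divisor $D_v$ of $\Phi'$ is the orbit closure of $v$; computing its fan in the quotient lattice $N(v) = N/\Z v$, the images of $e_1,\ldots,e_4$ are subject to the single relation $\bar e_1 + \bar e_2 + \bar e_3 + 2\bar e_4 = 0$, whence $D_v \cong \P(1,1,1,2)$ with its index-two point at $[0:0:0:1]$. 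Since $xy$, $z^2$, $t$ all have $v$-order one, dividing the equation by $t$ and restricting to $D_v$ yields the weight-two hypersurface
\[
    W_i = \{\, \xi_1\xi_2 - \xi_3^2 - \xi_4 = 0 \,\} \subset \P(1,1,1,2),
\]
which is linear in the weight-two coordinate $\xi_4$, misses $[0:0:0:1]$, and is therefore isomorphic to $\P^2$ via $[\xi_1:\xi_2:\xi_3:\xi_4] \mapsto [\xi_1:\xi_2:\xi_3]$; in particular $\tilde{\mathcal X}^\v$ avoids the singular point of $\tilde\C$ and is smooth. This gives part (1). As a cross-check, eliminating $t = xy - z^2$ identifies $\mathcal X^\v$ with the toric threefold $\C^3/\frac12(1,1,1)$ and $\Phi^\v$ with the weighted blow up of its vertex, whose exceptional divisor is manifestly $\P^2$.

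For part (2) I would pass to the smooth chart $U_{\langle v, e_2, e_3, e_4\rangle}$ with coordinates $s_0, s_2, s_3, s_4$ dual to $v, e_2, e_3, e_4$; there $xy = s_0 s_2$, $z^2 = s_0 s_3^2$, $t = s_0 s_4$, so the defining equation becomes $s_0(s_2 - s_3^2 - s_4) = 0$, the proper transform is the smooth locus $\{s_4 = s_2 - s_3^2\}$, and $W_i = \{s_0 = 0\}$. Restricting the projection to $\Delta^\v$, the central fibre $\{t = 0\} = \{s_0(s_2 - s_3^2) = 0\}$ is the reduced union of $W_i$ with the proper transform of $\mathcal X^\v_0 = (xy=z^2)$; the latter is smooth, hence is the minimal resolution of the $\frac14(1,1)$ point, and globally it is the rational elliptic surface $Y$ with $A_1, A_2$ restored. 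The double curve $W_i \cap Y$ is $\{s_2 = s_3^2\}$, which in homogeneous coordinates is the smooth conic $\{\xi_1\xi_2 = \xi_3^2\} \subset \P^2 = W_i$, exactly the assertion that $A_i$ is a conic. As a consistency check, the triple-point formula for the reduced normal-crossing fibre gives $(A_i^2)_Y + (A_i^2)_{W_i} = 0$, and since $(A_i^2)_{W_i} = 4$ one recovers $(A_i^2)_Y = -4$, matching Figure~\ref{fig: Minimal Configuration}.

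The hard part is not this local model but the descent to the actual family: I must verify that pulling $\Phi^\v$ back along $\Delta \to \Delta^\v$ keeps $\tilde{\mathcal X}$ smooth with reduced central fibre, so that no multiplicity is introduced along $W_i$ or along $Y$, and that the two local conics assemble into the genuine $(-4)$-curves $A_1, A_2$ on the single surface $Y$. This is precisely the point of the birational modification of \cite[\textsection 3]{Hacking:ExceptionalVectorBundle}; once one knows that $\tilde{\mathcal X} \to \Delta$ has central fibre the reduced normal crossing union $Y \cup W_1 \cup W_2$, both assertions follow from the computation above.
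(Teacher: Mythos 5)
Your proposal is correct and coincides with the paper's own treatment: the paper offers no proof beyond the pointer to \cite[\textsection 3]{Hacking:ExceptionalVectorBundle}, and your toric computation (the star subdivision at $v=\frac12(1,1,1,2)$ with $\{v,e_2,e_3,e_4\}$ a $\Z$-basis of $N$, the degree-two hypersurface $\xi_1\xi_2-\xi_3^2-\xi_4=0$ in $D_v\cong\P(1,1,1,2)$ missing the index-two point $[0:0:0:1]$, and the chart computation $s_0(s_2-s_3^2-s_4)=0$ exhibiting the reduced central fibre with double curve the conic $\xi_1\xi_2=\xi_3^2$) is precisely the argument of that reference, down to the triple-point-formula check $(A_i^2)_Y=-4$. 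The one step you explicitly defer---that the classifying map $\Delta\to\Delta^\v$ has degree one at $0$, so that the pulled-back family $\tilde{\mathcal X}$ remains smooth with reduced normal-crossing central fibre $Y\cup W_1\cup W_2$---is deferred by the paper to the same source, where it is built into the genericity of the $\Q$-Gorenstein smoothing furnished by the Lee--Park construction.
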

	Now, suppose that a divisor $D \in \Pic Y$ satisfies the conditions in Proposition~\ref{prop: Hacking Relative Homology Seq}. Let $2d_i := (D . A_i)$, then since $A_i$ is a conic curve in $W_i$, $\mathcal O_Y(D) \big\vert_{A_i} \simeq \mathcal O_{W_i}(d_i)\big\vert_{A_i}$. Let $\mathcal D_0$ be the glueing of $\mathcal O_Y(D)$, $\mathcal O_{W_1}(d_1)$, $\mathcal O_{W_2}(d_2)$, \textit{i.e.} the kernel of
	\[
		\mathcal O_Y(D) \oplus \mathcal O_{W_1}(d_1) \oplus \mathcal O_{W_2}(d_2) \to \mathcal O_{A_1}(2d_1) \oplus \mathcal O_{A_2}(2d_2),\qquad (s,s_1,s_2) \mapsto (s - s_1, s-s_2).
	\]
	Then, it can be easily proved that $\mathcal D_0$ is an exceptional line bundle on the reducible surface $\tilde{\mathcal X}_0 = Y \cup W_1 \cup W_2$. It is well-known that an exceptional vector bundle extends to a small neighborhood of deformation, thus shrinking $\Delta$, we can say that there exists a line bundle $\tilde{\mathcal D}$ such that $\tilde{\mathcal D}\big\vert_{\tilde{\mathcal X}_0} = \mathcal D_0$. Now, $\tilde{\mathcal D} \big\vert_{S}$ is a line bundle on $S$.
	\begin{notation}
		For a divisor $D \in \Pic Y$ as above, the divisor associated with the line bundle $\tilde{\mathcal D} \big\vert_{S}$ is denoted by $D^\g$.
	\end{notation}
	Since $\tilde{\mathcal D}$ is a flat family of line bundles, we have $\chi(D^\g) = \chi(\tilde{\mathcal D}_0)$. The latter can be computed via the short exact sequence
	\begin{equation}\label{eq: Short exact seq on Reducible fiber}
		0 \to \tilde {\mathcal D}_0 \to \mathcal O_Y(D) \oplus \mathcal O_{W_1}(d_1) \oplus \mathcal O_{W_2}(d_2) \to \mathcal O_{A_1}(2d_1) \oplus \mathcal O_{A_2}(2d_2) \to 0.
	\end{equation}
	By Riemann-Roch formula,
	\begin{equation}\label{eq: Euler Characteristic}
		\chi(\tilde{\mathcal D}_0) = \chi(D) + \frac{1}{2}d_1(d_1-1) + \frac{1}{2}d_2(d_2-1).
	\end{equation}
	\begin{lemma}
		For any divisor $D \in \Pic Y$ such that there exists an associated divisor $D^\g \in \Pic S$,
		\[
			(D^\g.K_S) = (D.E_0).
		\]
	\end{lemma}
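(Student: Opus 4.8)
The plan is to compute the Euler characteristic $\chi(D^\g)$ in two different ways and then read off $(D^\g.K_S)$ from the resulting identity. The formula \eqref{eq: Euler Characteristic} already expresses $\chi(D^\g)=\chi(\tilde{\mathcal D}_0)$ in terms of data on $Y$ together with the integers $d_i$. On the other hand, $S$ is a blow-up of an Enriques surface, so $\chi(\mathcal O_S)=1$, and Riemann--Roch on the surface $S$ gives $\chi(D^\g)=1+\tfrac12\bigl((D^\g)^2-(D^\g.K_S)\bigr)$. Matching these two expressions isolates $(D^\g.K_S)$, provided one knows the self-intersection $(D^\g)^2$.

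First I would compute $(D^\g)^2$ by deformation invariance of intersection numbers in the threefold $\tilde{\mathcal X}$. Since $\tilde{\mathcal D}$ is an honest line bundle on $\tilde{\mathcal X}$ and the fibers of $\tilde{\mathcal X}\to\Delta$ are linearly equivalent, one has $(D^\g)^2=\tilde{\mathcal D}^2\cdot S=\tilde{\mathcal D}^2\cdot(Y+W_1+W_2)$. Restricting to the smooth surfaces $Y$ and $W_i\simeq\P^2$, and using $\tilde{\mathcal D}\big\vert_Y=\mathcal O_Y(D)$ and $\tilde{\mathcal D}\big\vert_{W_i}=\mathcal O_{\P^2}(d_i)$, this yields $(D^\g)^2=D^2+d_1^2+d_2^2$. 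Substituting this, together with the Riemann--Roch expression $\chi(D)=1+\tfrac12(D^2-D.K_Y)$ on the rational surface $Y$, into the matched identity, the $D^2$ and $d_i^2$ terms cancel and I obtain $(D^\g.K_S)=(D.K_Y)+d_1+d_2$. Recalling $2d_i=(D.A_i)$, this reads $(D^\g.K_S)=D.\bigl(K_Y+\tfrac12 A_1+\tfrac12 A_2\bigr)$.

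It then remains to verify the numerical identity $K_Y+\tfrac12(A_1+A_2)=E_0$ in $\Pic Y\otimes\Q$, after which intersecting with $D$ finishes the proof. This is a direct computation with the generators: $Y$ is $\P^2$ blown up at the twelve points producing $E_0,\dots,E_9,B_1,B_2$, so $K_Y=-3p^*H+E_0+\sum_{k=1}^9 E_k+B_1+B_2$, while the proper transform of the nodal cubic $(h_i=0)$ is $A_i=3p^*H-\sum_{k=1}^9 E_k-2B_i$, the node contributing the term $-2B_i$. Adding these, the $p^*H$, $E_k$, and $B_i$ contributions all cancel, leaving exactly $E_0$. As a sanity check I would recompute $A_i^2=-4$ and $(A_i.B_i)=2$ and compare with Figure~\ref{fig: Minimal Configuration}.

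The main obstacle is the self-intersection step: one must be sure that intersection numbers are genuinely preserved across the reducible central fiber and that $\tilde{\mathcal D}^2\cdot W_i$ really computes $\bigl(\mathcal O_{\P^2}(d_i)\bigr)^2$ rather than picking up correction terms from the (possibly non-Cartier) components of $\tilde{\mathcal X}_0$. Because $\tilde{\mathcal D}$ is Cartier on $\tilde{\mathcal X}$ and is only ever restricted to the smooth surfaces $Y$ and $W_i$, this is legitimate, but it is the point that requires care; the identification $\tilde{\mathcal D}\big\vert_Y=\mathcal O_Y(D)$ coming from the gluing definition of $\mathcal D_0$ must also be invoked. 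Everything else is bookkeeping in $\Pic Y$.
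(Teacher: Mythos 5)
Your proof is correct, but it takes a genuinely different route from the paper's. The paper argues: since $K_S$ is numerically equivalent to $E_0^\g$, it suffices to show $(D^\g.E_0^\g)=(D.E_0)$; it writes this intersection number as $\chi(D^\g+E_0^\g)-\chi(D^\g)$, evaluates both Euler characteristics by the gluing formula (\ref{eq: Euler Characteristic}) applied to $\tilde{\mathcal D}_0\otimes\tilde{\mathcal E}_0$ and $\tilde{\mathcal D}_0$ (the correction terms $\tfrac12 d_i(d_i-1)$ are the same for both because $(E_0.A_i)=0$, so they cancel), and is left with $\chi(D+E_0)-\chi(D)=(D.E_0)$ on $Y$. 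You instead isolate $(D^\g.K_S)$ by matching Riemann--Roch on $S$ against (\ref{eq: Euler Characteristic}), which needs the extra input $(D^\g)^2=D^2+d_1^2+d_2^2$, obtained from constancy of $\tilde{\mathcal D}^2\cdot(\text{fiber class})$, and then the identity $K_Y+\tfrac12(A_1+A_2)=E_0$ in $\Pic Y\otimes\Q$, which your bookkeeping with $K_Y=-3p^*H+\sum_{k=0}^9E_k+B_1+B_2$ and $A_i=3p^*H-\sum_{k=1}^9E_k-2B_i$ verifies correctly (and your sanity checks $A_i^2=-4$, $(A_i.B_i)=2$ match Figure~\ref{fig: Minimal Configuration}). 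What each approach buys: the paper's argument is shorter and stays entirely within Euler characteristics of flat families, with no intersection theory on the threefold, but it leans on the assertion $K_S=_{\sf num}E_0^\g$, which the note does not prove; your derivation does not need that assertion --- it effectively re-proves it via the $\Q$-Gorenstein adjunction identity $K_Y+\tfrac12\sum_iA_i=E_0$ --- and it produces the self-intersection formula $(D^\g)^2=D^2+d_1^2+d_2^2$ as a by-product, which is consistent with the paper's displayed formula $(D^\g)^2=(D.E_0)+2\chi(D)+d_1(d_1-1)+d_2(d_2-1)-2$ upon applying Riemann--Roch on $Y$ together with $D.K_Y=D.E_0-d_1-d_2$. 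The step you rightly single out as delicate --- that $\tilde{\mathcal D}^2\cdot\tilde{\mathcal X}_0$ splits as the sum of component contributions --- is legitimate here because the central fiber $Y\cup W_1\cup W_2$ is reduced with smooth components and $\tilde{\mathcal D}$ is Cartier on the total space; if you prefer to avoid intersection theory on the (singular) threefold entirely, note that flatness of $\tilde{\mathcal D}^{\otimes m}$ makes $\chi\bigl(\tilde{\mathcal D}^{\otimes m}\big\vert_{\tilde{\mathcal X}_t}\bigr)$ independent of $t$, and comparing the $m^2$-coefficients of the resulting Snapper polynomials gives the same identity, in the spirit of the paper's own use of (\ref{eq: Short exact seq on Reducible fiber}).
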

	\begin{proof}
		Since $K_S$ is numerically equivalent to $E_0^\g$, it suffices to prove that $(D^\g. E_0^\g) = (D. E_0)$. By Riemann Roch formula, one can compute
		\[
			(D^\g.E_0^\g) = \chi(D^\g + E_0^\g) - \chi(D^\g).
		\]
		Let $\tilde {\mathcal D}_0$ be the line bundle obtained by glueing $\mathcal O_Y(D)$, $\mathcal O_{W_1}(d_1)$, $\mathcal O_{W_2}(d_2)$, and $\tilde {\mathcal E}_0$ be the line bundle obtained by glueing $\mathcal O_Y(E_0)$, $\mathcal O_{W_1}$, $\mathcal O_{W_2}$. Then, using the formula (\ref{eq: Euler Characteristic}) we get
		\begin{align*}
			(D^\g.E_0^\g) &= \chi(D^\g + E_0^\g) - \chi(D^\g) = \chi(\tilde{\mathcal D}_0 \otimes \tilde{\mathcal E}_0) - \chi(\tilde {\mathcal D}_0) \\
			&= \bigl( \chi( D + E_0 ) + \frac 12 d_1(d_1-1) + \frac12 d_2(d_2-1) \bigr) - \bigl( \chi(D) +  + \frac 12 d_1(d_1-1) + \frac12 d_2(d_2-1) \bigr) \\
			&= \chi(D+E_0) - \chi(D) = (D.E_0). \qedhere
		\end{align*}
	\end{proof}
	Combining all together, we get the intersection formula:
	\[
		(D^\g)^2 = (D.E_0) + 2 \chi( D) + d_1(d_1-1) + d_2(d_2-1) - 2.
	\]
	Note that all the information from the right hand side can be read in $Y$. Now, it is just a matter of computations to derived the following intersection table: let $1 \leq i \neq j \leq 9$
	\begin{equation}\label{eq: Intersection table of components}
		\begin{array}{c| c c c c c}
			& Q^\g & \ell_i^\g & \ell_j^\g & B_1^\g & E_0^\g \\ \hline
		Q^\g & 22 & 10 & 10 & 3 & 0 \\
		\ell_i^\g & 10 & 2 & 3 & 1 & 0 \\
		\ell_j^\g & 10 & 3 & 2 & 1 & 0 \\
		B_1^\g & 3 & 1 & 1 & 0 & 0 \\
		E_0^\g & 0 & 0 & 0 & 0 & -1
		\end{array}
	\end{equation}
	Note that $B_1^\g - B_2^\g$ is numerically trivial, so the above table also includes the intersections involving $B_2^\g$.
	\begin{proposition}\label{prop: Intersection theory}
		The intersection matrix of divisors $\{D_i^\g\}_{i=1,\ldots,11}$ is given by
		\[
			\bigl( ( D_i^\g . D_j^\g ) \bigr)_{1 \leq i,j \leq 11} = \left [
				\begin{array}{cccc}
					-1 & \cdots & 0 & 0 \\
					\vdots & \ddots & \vdots & \vdots \\
					0 & \cdots & -1 & 0 \\
					0 & \cdots & 0 & 1
				\end{array}
			\right]\raisebox{-2\baselineskip}[0pt][0pt]{.}
		\]
		Also, it holds that $K_S =_{\sf num} D_1^\g + \ldots + D_{10}^\g - 3D_{11}^\g$, where $=_{\sf num}$ indicates the numerical equivalence relation.
	\end{proposition}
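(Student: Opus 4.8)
The plan is to reduce the statement to bilinear algebra in $\op{Num}(S)\otimes\Q$, using the intersection table (\ref{eq: Intersection table of components}) together with the preceding lemma $(D^\g.K_S)=(D.E_0)$. The first thing I would record is that $D\mapsto D^\g$ is additive on the subgroup of $\Pic Y$ cut out by the parity condition of Proposition~\ref{prop: Hacking Relative Homology Seq}: if $D,D'$ both satisfy this condition then so does $D+D'$, with $d_i(D+D')=d_i(D)+d_i(D')$, and the glued bundle attached to $D+D'$ is canonically the tensor product of those attached to $D$ and $D'$. Since an exceptional bundle is rigid, its extension across the deformation is unique, so these tensor products persist after passing to $\tilde{\mathcal X}$ and restricting to $S$; hence $(D+D')^\g=D^\g+D'^\g$ numerically. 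Consequently each $D_i^\g$ $(1\le i\le 11)$ is the same integral combination of $Q^\g,\ell_1^\g,\dots,\ell_9^\g,B_1^\g,E_0^\g$ that defines $D_i$, the class $B_2^\g$ never intervening (and being numerically equal to $B_1^\g$ in any case).

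With this in hand I would expand each product $(D_i^\g.D_j^\g)$ by bilinearity and substitute the entries of (\ref{eq: Intersection table of components}). The diagonal splits into three cases: for $1\le i\le 9$ one has $D_i^\g=-\ell_i^\g+E_0^\g+B_1^\g$, so $(D_i^\g)^2=(\ell_i^\g)^2+(E_0^\g)^2+(B_1^\g)^2-2(\ell_i^\g.E_0^\g)-2(\ell_i^\g.B_1^\g)+2(E_0^\g.B_1^\g)$, which the table evaluates to $-1$; the analogous expansions give $(D_{10}^\g)^2=-1$ and $(D_{11}^\g)^2=1$. For $i\neq j$ the same bookkeeping shows that all the cross terms cancel against the table, so $(D_i^\g.D_j^\g)=0$. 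This is exactly the matrix $\op{diag}(-1,\dots,-1,1)$.

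For the canonical class I would argue by pairing against the basis just obtained. The Gram matrix $\op{diag}(-1,\dots,-1,1)$ has determinant $1$, so $D_1^\g,\dots,D_{11}^\g$ are linearly independent; as $\rho(S)=\rho(S')+1=11$ they form a $\Q$-basis of $\op{Num}(S)\otimes\Q$, and since the intersection pairing on a surface is nondegenerate a class is determined up to numerical equivalence by its products with this basis. On one side the preceding lemma gives $(K_S.D_i^\g)=(D_i.E_0)$, which I read off on $Y$ as $-1$ for $1\le i\le 10$ and $-3$ for $i=11$. On the other side, setting $L=D_1^\g+\dots+D_{10}^\g-3D_{11}^\g$ and using the diagonal matrix, $(L.D_i^\g)=(D_i^\g)^2=-1$ for $i\le 10$ and $(L.D_{11}^\g)=-3(D_{11}^\g)^2=-3$. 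The two lists agree for every $i$, so $K_S=_{\sf num}L$.

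The bilinear expansions are routine once additivity of $(\,\cdot\,)^\g$ is established, so the step that really needs care is the final one: inferring the numerical equivalence $K_S=_{\sf num}L$ from coincidence of intersection numbers. This hinges on $D_1^\g,\dots,D_{11}^\g$ spanning $\op{Num}(S)\otimes\Q$, which I would justify by the rank count $\rho(S)=11$ together with the unimodularity of the matrix produced in the second paragraph.
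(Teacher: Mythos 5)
Your proof of the Gram matrix is in outline the same as the paper's (the paper simply says the matrix ``is obtained immediately from (\ref{eq: Intersection table of components})''), and your explicit justification that $D\mapsto D^\g$ is additive --- tensoring the glued bundles and invoking rigidity to identify the extensions --- is a point the paper leaves implicit. However, your assertion that ``all the cross terms cancel against the table'' is exactly the step where the bookkeeping needed care, and it fails for the table as printed: for $1\le i\le 9$, expanding with the printed entry $(Q^\g.\ell_i^\g)=10$ and discarding the entries that vanish gives
\[
	(D_i^\g.D_{11}^\g)=(Q^\g.\ell_i^\g)-2(\ell_i^\g.B_1^\g)+3(E_0^\g)^2-(Q^\g.B_1^\g)=10-2-3-3=2\neq 0.
\]
The printed entry is an erratum: polarizing (\ref{eq: Euler Characteristic}) via flatness, additivity and Riemann--Roch on both $Y$ and $S$ yields $(D^\g.D'^\g)=(D.D')+d_1d_1'+d_2d_2'$, whence $(Q^\g.\ell_i^\g)=2+3\cdot 1+3\cdot 1=8$, while every other entry of (\ref{eq: Intersection table of components}) passes this check; with $8$ in place of $10$ the cross terms do cancel and one gets $\op{diag}(-1,\ldots,-1,1)$. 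So the proposition is true, but your write-up declares ``routine'' an expansion that the quoted data actually contradicts; a blind proof had to detect and repair this discrepancy. (The paper's own one-line proof of this half has the same glitch, so this is as much an erratum report as a criticism of you.)

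For the canonical class your route genuinely differs from the paper's. The paper computes the class itself: $\sum_{i=1}^{10}D_i^\g-3D_{11}^\g=3Q^\g-\sum_{i=1}^{9}\ell_i^\g+2B_1^\g+E_0^\g$, then uses the fibration identity $\sum_{i=1}^{9}\ell_i^\g=3Q^\g+A_0^\g$ and the half-fiber relation $A_0^\g=2B_1^\g$ to land on $E_0^\g$, which is numerically $K_S$. You instead pair $K_S$ and $L=\sum_{i\le 10}D_i^\g-3D_{11}^\g$ against the $D_i^\g$, using the lemma $(K_S.D_i^\g)=(D_i.E_0)$ (your values $-1,\ldots,-1,-3$ are correct), and conclude by nondegeneracy of the pairing, unimodularity of the Gram matrix, and the rank count $\rho(S)=11$. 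This is valid and bypasses the elliptic-fibration geometry entirely, at the cost of the rank count; the paper's computation is sharper in that it produces the identity $\sum_{i=1}^{10}D_i^\g-3D_{11}^\g=E_0^\g$ in $\Pic S$ rather than only in $\op{Num}(S)$ (though the proposition asserts only numerical equivalence, so your version suffices). Note also that your duality argument consumes the diagonal Gram matrix as input, so it inherits the gap flagged above until the table entry is corrected.
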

	\begin{proof}
		The intersection table is obtained immediately from (\ref{eq: Intersection table of components}). Furthermore,
		\begin{align*}
			\textstyle\sum_{i=1}^{10} D_i^\g - 3D_{11}^\g &=\textstyle ( -\sum_{i=1}^9 \ell_i^\g + 8B_1^\g + 10E_0^\g) - 3( -Q^\g + 2B_1^\g + 3E_0^\g ) \\
			&= \textstyle 3Q^\g - \sum_{i=1}^9 \ell_i^\g + 2B_1^\g + E_0^\g,
		\end{align*}
		and $\sum_{i=1}^9 \ell_i = \bigl( p^*(6H) \bigr)^\g + \bigl( p^*(3H) - \sum_{i=1}^9 E_i )\bigr)^\g = 3 Q^\g + A_0^\g$ where $A_0^\g$ is the general elliptic fiber of $S$ induced by the elliptic fibration of $Y \to \P^1$. This leads to
		\[
			\textstyle\sum_{i=1}^{10} D_i^\g - 3D_{11}^\g = -A_0^\g + 2B_1^\g + E_0^\g.
		\]
		Since $A_0^\g = 2B_1^\g$, we have $\sum_{i=1}^{10} D_i^\g - 3D_{11}^\g = E_0^\g =_{\sf num} K_S$.
	\end{proof}
	\begin{corollary}
		For $0 \leq j < i \leq 12$, $\chi(-D_i^\g + D_j^\g)=0$.
	\end{corollary}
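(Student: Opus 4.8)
The quantity $\chi(-D_i^\g + D_j^\g)$ is exactly the Euler form $\sum_p (-1)^p \dim \Ext^p(\mathcal O_S(D_i^\g),\mathcal O_S(D_j^\g))$, since $\Ext^p(\mathcal O_S(D_i^\g),\mathcal O_S(D_j^\g)) = H^p(S,\mathcal O_S(D_j^\g - D_i^\g))$ by \eqref{eq: Hom in Derived categories}; for $i>j$ its vanishing is precisely the numerical shadow of the semiorthogonality that Theorem~\ref{thm: Main Thm} must eventually supply. The plan is to compute this Euler characteristic directly from Riemann--Roch. Because $S$ is a one-point blow up of a minimal Enriques surface, $\chi(\mathcal O_S)=1$ and $K_S =_{\sf num} E_0^\g$, the latter being recorded in Proposition~\ref{prop: Intersection theory}. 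Writing $L=D_j^\g-D_i^\g$, Riemann--Roch gives
\[
	\chi(L) = \chi(\mathcal O_S) + \tfrac12\, L\cdot(L-K_S) = 1 + \tfrac12\bigl(L^2 - L\cdot E_0^\g\bigr),
\]
where I have replaced $K_S$ by its numerical representative $E_0^\g$, which is legitimate since intersection numbers depend only on the numerical class. Thus $\chi(L)=0$ is equivalent to the single identity $L^2 - L\cdot E_0^\g = -2$, and the whole proof reduces to verifying this for every pair $0\le j<i\le 12$.

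All the needed inputs come from Proposition~\ref{prop: Intersection theory}. For $1\le k\le 11$ the classes $D_k^\g$ are mutually orthogonal with $(D_k^\g)^2=-1$ for $k\le 10$ and $(D_{11}^\g)^2=1$. Combining this orthogonality with the relation $E_0^\g =_{\sf num}\sum_{i=1}^{10} D_i^\g - 3D_{11}^\g$ yields at once $D_k^\g\cdot E_0^\g = -1$ for $1\le k\le 10$ and $D_{11}^\g\cdot E_0^\g = -3$. Every divisor entering the corollary lies in the lattice spanned by $D_1^\g,\ldots,D_{11}^\g$: indeed $D_0^\g=0$ and $D_{12}^\g = 2D_{11}^\g$, so each $L=D_j^\g-D_i^\g$ acquires integer coordinates $a=(a_1,\ldots,a_{11})$ in this basis, and the target identity becomes the purely arithmetic assertion
\[
	\textstyle\sum_{k=1}^{10}(a_k-a_k^2) + a_{11}^2 + 3a_{11} = -2.
\]

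The concluding step is a short case analysis over the possible coordinate vectors: $a=-e_i$ or $-2e_{11}$ when $j=0$; $a=e_j-e_i$ when $1\le j<i\le 11$; and $a=e_j-2e_{11}$ when $i=12$. In each case at most two coordinates are nonzero, each lying in $\{\pm1,\pm2\}$, so the displayed expression is evaluated in a single line and always returns $-2$. I do not anticipate any genuine obstacle; the only points that demand care are the two boundary indices, $D_0^\g$ and $D_{12}^\g$, which lie outside the range $1\le i,j\le 11$ covered by the intersection matrix of Proposition~\ref{prop: Intersection theory} and must therefore be inserted by hand via $D_0^\g=0$ and $D_{12}^\g=2D_{11}^\g$, together with the background facts $\chi(\mathcal O_S)=1$ and $K_S=_{\sf num}E_0^\g$ for this nonminimal surface.
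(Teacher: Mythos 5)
Your proof is correct and is essentially the paper's own argument: the paper disposes of this corollary in one line as ``an immediate consequence of Proposition~\ref{prop: Intersection theory} and Riemann--Roch,'' which is exactly the computation you carry out explicitly, with the boundary cases handled correctly via $D_0^\g=0$ and $D_{12}^\g=2D_{11}^\g$ (the latter legitimate since $D\mapsto D^\g$ is additive, as the paper itself uses in the proof of Proposition~\ref{prop: Intersection theory}). Your case analysis of $\sum_{k\le 10}(a_k-a_k^2)+a_{11}^2+3a_{11}=-2$ is complete and each evaluation checks out.
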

	\begin{proof}
		This is an immediate consequence of Proposition~\ref{prop: Intersection theory} and Riemann-Roch theorem. \qedhere
	\end{proof}
	To prove Theorem~\ref{thm: Main Thm}, we have to prove that $h^p(-D_i^\g + D_j^\g)=0$ for each $p$ and $0 \leq j<i \leq 12$. By the above corollary, it suffices to prove only for $p=0,2$. By Serre duality, $h^2(-D_i^\g + D_j^\g)=h^0(K_S + D_i^\g - D_j^\g)$, thus understanding $h^0$ of divisors is enough to prove Theorem~\ref{thm: Main Thm}. This can be done using the short exact sequence (\ref{eq: Short exact seq on Reducible fiber}); it is easy to see that $h^0(\mathcal O_{W_i}(d_i)) \to h^0(\mathcal O_{A_i}(2d_i))$ is always surjective, thus
	\begin{equation}\label{eq: Semicontinuity on h^0}
		h^0(D^\g) \leq h^0(\tilde{\mathcal D}_0) = h^0(D) + h^0(\mathcal O_{W_1}(d_1)) + h^0(\mathcal O_{W_2}(d_2)) - h^0(\mathcal O_{A_1}(2d_1)) - h^0(\mathcal O_{A_2}(2d_2)).
	\end{equation}
	Before proceed to the proof, we give one remark explaining why we need to use a computer-based approach.
	\begin{example}\label{eg: h^0 depends on configuration}
		The divisor $-D_9^\g + D_0^\g = (p^*H - E_9 - E_0 - B_1)^\g$ can be obtained by deforming $D := p^*H - E_9 - E_0 - B_1$. Since $(D.A_1) = 0$ and $(D.A_2)=2$,
		\begin{align*}
			h^0(D^\g) &\leq h^0(D) + h^0(\mathcal O_{W_1}) + h^0(\mathcal O_{W_2}(1)) - h^0(\mathcal O_{A_1}) - h^0(\mathcal O_{A_2}(2)) \\
			&= h^0(D).
		\end{align*}
		Because of its divisor form, $h^0(D)$ depends on the configuration of $(h_i=0)$. Indeed, $h^0(D)=1$ if the points $p(E_9)$, $p(E_0)$ and $p(B_1)$ are colinear and is zero otherwise. If these three points are colinear, we just have $h^0(D^\g) \leq 1$, thus we cannot see the desired vanishing. In this simple example, we can present the possible values of $h^0(D)$ together with exact criterion, but it is getting complicated for other pairs. For example, to conclude $h^0(-D_{12}^\g + D_{10}^\g) = 0$, we will see that it suffices to prove that $h^0(D)=0$ where
		\[
			\textstyle p^*(16H) - 4\sum_{i \leq9}E_i - 6E_0 - 6A_2 - 6B_2.
		\]
		This means that we have to show there is no plane curve of degree $16$ which passes through $p(E_1),\,\ldots,\,p(E_9)$ 4 times for each, $p(E_0)$ 6 times, $p(A_2)$ 6 times, and $p(B_2)$ 6 times.
	\end{example}
	\begin{lemma}\label{lem: Perturbation of h_i}
		Let $D$ be a divisor of the form
		\begin{equation}\label{eq: divisors interpreted as plane curves}
			p^*(dH) - (\text{positive sum of }E_0,\, E_1,\, \ldots, \,E_9,\, B_1,\, B_2 ).
		\end{equation}
		For given particular $h_1,h_2$, assume $h^0(D) = N$. Then, $h^0(D) \leq N$ for general $h_1,h_2$.
	\end{lemma}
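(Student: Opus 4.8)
The plan is to read the assertion as the upper semicontinuity of $h^0$ in a flat family and to construct that family explicitly.

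First I would assemble the varying data into an irreducible base. Let $U \subset \P^9 \times \P^9 \times \P^2$ be the locus of triples $(h_1, h_2, q)$ such that each cubic $(h_i = 0)$ is nodal, the intersection $h_1 \cap h_2$ consists of nine distinct points, and $q$ lies off $(h_1 = 0) \cup (h_2 = 0)$ and off those nine points. The nodal cubics form an open dense subset of the irreducible discriminant in $\P^9$, so the pairs of nodal cubics are irreducible; cutting out the remaining open conditions keeps $U$ a locally closed irreducible subvariety. Each $s = (h_1, h_2, q) \in U$ determines twelve distinct points of $\P^2$, namely the nine base points $p(E_1), \ldots, p(E_9)$, the two nodes $p(B_1), p(B_2)$, and $q = p(E_0)$, and these twelve vary algebraically and remain pairwise distinct over $U$.

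Next I would form the universal blow-up. Over $U$ the twelve centers trace out twelve disjoint sections of $\P^2 \times U \to U$, so blowing them up yields a smooth projective morphism $\rho \colon \mathcal{Y} \to U$ with $\rho^{-1}(s) = Y_s$. Writing $\mathcal{E}_0, \ldots, \mathcal{E}_9, \mathcal{B}_1, \mathcal{B}_2$ for the universal exceptional divisors and $p^*\mathcal{H}$ for the relative pullback of a line, I would set
\[
	\mathcal{D} := p^*(d\,\mathcal{H}) - (\text{the prescribed positive combination of }\mathcal{E}_0, \ldots, \mathcal{E}_9, \mathcal{B}_1, \mathcal{B}_2),
\]
which restricts on each fiber $Y_s$ to the divisor $D$ of the form (\ref{eq: divisors interpreted as plane curves}). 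Thus $\mathcal{O}_{\mathcal{Y}}(\mathcal{D})$ is a line bundle on $\mathcal{Y}$, and since $\rho$ is smooth it is flat over $U$ with $\mathcal{O}_{\mathcal{Y}}(\mathcal{D})\big\vert_{Y_s} \simeq \mathcal{O}_{Y_s}(D)$.

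Finally I would apply the semicontinuity theorem to the flat family $(\rho, \mathcal{O}_{\mathcal{Y}}(\mathcal{D}))$: the function $s \mapsto h^0(Y_s, \mathcal{O}_{Y_s}(D))$ is upper semicontinuous on $U$, so the locus $\{ s \in U : h^0 \geq N+1 \}$ is closed. The given particular configuration furnishes a point $s_0 \in U$ with $h^0 = N$, so this locus is a proper closed subset; as $U$ is irreducible its complement is dense and open, and there $h^0(D) \leq N$. This is precisely the claim for general $h_1, h_2$. The only non-formal step is the construction of $\rho$ and of the relative class $\mathcal{D}$: one must check that the twelve centers really do sweep out twelve disjoint sections over the whole of $U$, which is exactly guaranteed by the open conditions defining $U$ (distinctness of the base points, nodality of each $h_i$ so that its node is a single algebraically varying point, and generality of $q$). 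Once this is in place the blow-up is smooth, $\mathcal{O}_{\mathcal{Y}}(\mathcal{D})$ is a genuine fiberwise line bundle, and the conclusion is an immediate application of semicontinuity.
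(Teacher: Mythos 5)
Your reading of the lemma as upper semicontinuity of $h^0$ in an algebraic family is exactly the idea behind the paper's proof, which implements it with bare hands: the vanishing conditions at the assigned points are encoded as a linear system $M\mathbf{c}=0$ in the coefficients of a degree-$d$ form, and $h^0(D)=\dim\ker M$ is upper semicontinuous because matrix rank is lower semicontinuous. However, your implementation has a genuine gap: the nine base points of $h_1\cap h_2$ do \emph{not} trace out nine sections of $\P^2\times U\to U$. Only their union, a finite \'etale multisection of degree $9$, is defined over $U$; monodromy permutes the individual points as $(h_1,h_2)$ varies. Indeed, the paper's Lemma~\ref{lem: Permutation lemma} rests on precisely this phenomenon: for suitable $h_1,h_2$ defined over $\Q$ the nine points are Galois conjugates of one another, so no algebraic rule over $U$ can single one of them out. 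Consequently the universal exceptional classes $\mathcal E_1,\ldots,\mathcal E_9$ do not exist on any blow-up defined over $U$, and neither does your relative class $\mathcal D$ whenever the coefficients of $E_1,\ldots,E_9$ in $D$ are not all equal---and the lemma is applied to exactly such divisors in the proof of Proposition~\ref{prop: Computable Subcollection}, e.g.\ $p^*(12H)-3\sum_{i\leq 8}E_i-2E_9-5E_0-6B_1-4B_2$. (The center $q$ and the two nodes are honest sections, since a nodal cubic has a unique singular point varying algebraically with it, so for $E_0$, $B_1$, $B_2$ there is no problem.)

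The repair is standard and preserves your architecture: replace $U$ by the incidence variety $\tilde U\subset U\times(\P^2)^9$ parameterizing a configuration together with an ordering $(p_1,\ldots,p_9)$ of the nine base points. The projection $\tilde U\to U$ is finite \'etale (the base points are pairwise distinct on $U$), hence both open and closed, so every irreducible component of $\tilde U$ surjects onto the irreducible $U$. On the component through a labeling of your particular configuration $s_0$, the twelve centers genuinely are disjoint sections, the universal blow-up $\mathcal Y$ is smooth and projective over the base, $\mathcal D$ exists as a relative class, and the semicontinuity theorem gives a dense open locus where $h^0\leq N$, whose image in $U$ contains a dense open set of pairs $(h_1,h_2)$. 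Note also that the statement of the lemma tacitly fixes a labeling of $E_1,\ldots,E_9$, so its honest home is $\tilde U$ rather than $U$; the paper's matrix argument carries the same subtlety (it follows the points locally, where they can be ordered) and defuses the label-dependence separately through Lemma~\ref{lem: Permutation lemma}. With this modification your argument is correct, and it is a more structural---if heavier---route to the same semicontinuity that the paper extracts directly from the rank of an explicit matrix of point conditions.
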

	\begin{proof}
		As explained in Example~\ref{eg: h^0 depends on configuration}, counting $h^0(D)$ reduces down to count the dimension of the space of plane curves passing through the prescribed positions. Consider the homogeneous equation $\sum_\alpha c_\alpha \mathbf{x}^\alpha$ of degree $d$, where $\alpha = (\alpha_x,\alpha_y,\alpha_z)$ is a $3$-tuple with $\alpha_x + \alpha_y + \alpha_z = d$ and $\mathbf{x}^\alpha = x^{\alpha_x} y^{\alpha_y}z^{\alpha_z}$. Then the positional conditions given by $D$ will imposes linear conditions on $(c_\alpha)$, thus we get a system of linear equations, say $M \mathbf{c} = 0$. Now, $h^0(D)$ is the dimension of the space of solutions of this system. If we perturb $h_1,h_2$ then it perturbs $M$, but the rank of matrices is a lower-semicontinuous function, so $\dim \op{ker}M = h^0(D)$ is upper-semicontinuous with respect to $h_1,h_2$.
	\end{proof}
	Now it suffices to prove Theorem~\ref{thm: Main Thm} for the particular choice of $h_1,h_2$. However, since we use computer-based approach, we still need another obstruction to overcome. Imagine the situation that we are given a divisor $D$ of the form (\ref{eq: divisors interpreted as plane curves}), and $h_1,h_2$ are explicitly given. We have to tell the computer the conditions imposed by the plane curve $p_*D$ as an ideal sheaf of $\P^2$. The problem is that it is extremely hard\,(perhaps impossible) to find an example of $h_1,h_2$ such that the points corresponding to $E_1,\ldots,E_9,B_1,B_2$ are defined over a subfield of $\C$ which is solvable by radicals over $\Q$. Unless this is possible, we cannot define the explicit ideals in computers. This problem can be resolved by observing some symmetric nature between $E_1,\ldots,E_9$. In the end, we will see that it suffices to find cubics such that only $p(E_9)$, $p(B_1)$, $p(B_2)$ are defined over $\Q$.
	\begin{lemma}\label{lem: Permutation lemma}
		Assume $h_1,\,h_2$ define general nodal plane cubics. Let $D$ be a divisor in the rational elliptic surface $Y$, and assume that in the expression of $D$ in terms of the $\Z$-basis $\{p^*H,\,E_1,\,\ldots,\,E_9,\,A_2,\,B_2,\,B_3\}$, the coefficients of $E_1,\,\ldots,\,E_9$ are same. Then, $h^p(D+E_i) = h^p(D+E_j)$ for any $p$ and any $1 \leq i,j \leq 9$.
	\end{lemma}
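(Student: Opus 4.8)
The hypothesis on $D$ is really a symmetry condition. Writing $D$ in the given basis as $D = a\,p^*H + m\sum_{k=1}^{9}E_k + (b A_2 + c B_2 + e B_3)$, the equality of the coefficients of $E_1,\ldots,E_9$ says exactly that $D$ is fixed by every permutation of the classes $E_1,\ldots,E_9$ that leaves $p^*H, A_2, B_2, B_3$ alone. The plan is to realize such a permutation geometrically, as a \emph{symmetry of the whole construction}, and then transport cohomology along it; this is the arithmetic shadow of the irreducibility of the universal base point of a pencil of cubics.

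Concretely, I would pass to an arithmetic model. Choose the nodal cubics $h_1,h_2$ over $\Q$ with rational nodes, and take the extra blown-up point over $\Q$ as well, so that the surface $Y$ descends to a smooth projective $Y_\Q$ and the classes $p^*H, A_2, B_2, B_3$ are represented by $\op{Gal}(\overline{\Q}/\Q)$-invariant divisors. The nine base points $p(E_1),\ldots,p(E_9) = \{h_1=0\}\cap\{h_2=0\}$ form a $\op{Gal}(\overline{\Q}/\Q)$-stable set, and Galois permutes the exceptional classes $E_1,\ldots,E_9$ accordingly while fixing every other generator. For a sufficiently general rational pencil the nine points constitute a single Galois orbit, so for each pair $i,j$ there is $\tau \in \op{Gal}(\overline{\Q}/\Q)$ with $\tau(E_i)=E_j$; since $\tau$ fixes $p^*H, A_2, B_2, B_3$ and merely permutes $E_1,\ldots,E_9$, it fixes $D$, whence $\tau(D+E_i)=D+E_j$.

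With this the conclusion is formal and handles all $p$ at once. The automorphism $\tau_Y$ of $Y_{\overline\Q}$ induced by $\tau$ carries $\mathcal O(D+E_j)$ to $\mathcal O(D+E_i)$, so pullback gives a $\tau$-semilinear isomorphism $H^p(Y_{\overline\Q},\mathcal O(D+E_j)) \xto{\sim} H^p(Y_{\overline\Q},\mathcal O(D+E_i))$ for every $p$; being a bijection it preserves $\overline\Q$-dimensions. By flat base change along $\overline\Q \hookrightarrow \C$ these dimensions agree with the complex ones, so $h^p(D+E_i)=h^p(D+E_j)$ for this particular rational pencil and all $p$. Finally, the functions $h^p(D+E_i)$ are upper semicontinuous on the parameter space of pencils (of which Lemma~\ref{lem: Perturbation of h_i} is the $h^0$-incarnation already used), so each has a well-defined generic value; choosing the rational pencil inside the locus where all of them are generic, the equality just proved forces these generic values to be independent of $i$, and hence $h^p(D+E_i)=h^p(D+E_j)$ for all general complex $h_1,h_2$.

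The step I expect to be the main obstacle is guaranteeing the existence of a rational pencil that is \emph{simultaneously} cohomologically general and has its nine base points forming a single Galois orbit. Transitivity of the Galois action is the genuine content: although the nine points satisfy the Cayley--Bacharach relation (any eight determine the ninth), that relation is symmetric in the nine points and so cannot split the orbit. I would argue that the locus of rational pencils whose base points fail to be a single orbit is thin, and then combine the Hilbert irreducibility theorem with the density of $\Q$-points to land a rational pencil inside the Zariski-dense, $\Q$-rational locus where every $h^p(D+E_i)$ attains its generic value, checking along the way that the nodes and the auxiliary point can be taken rational without forcing any of the nine base points to become rational.
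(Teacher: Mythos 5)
Your proposal is correct and is essentially the paper's own argument: descend to an arithmetic model $Y_\Q$ with $p(E_0)$, the cubics, and their nodes rational, find a field automorphism that permutes $E_1,\ldots,E_9$ while fixing the remaining generators (hence fixing $D$), and transport cohomology along the induced semilinear automorphism of $Y$, concluding for general pencils by semicontinuity. The only cosmetic differences are that the paper takes $\tau \in \op{Aut}(\C/\Q)$ acting as ${\sf Id}_{Y_\Q} \times \tau$ directly on the complex surface rather than working over $\overline{\Q}$ and base-changing, and it secures transitivity of the Galois action on the nine base points by explicit conditions (primality of the ideal $(h_1,h_2)$ in $\Q[x,y,z]$ and mutual conjugacy of the coordinates $x_1,\ldots,x_9$ and $y_1,\ldots,y_9$) verified for a concrete choice of cubics, in place of your Hilbert-irreducibility argument.
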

	\begin{proof}
		The statement is a slight variation of \cite[Lemma~5.8]{ChoLee:ExcColl_Dolgachev}, so we do not give a precise proof here. The main idea is the following: first, pick $h_1,h_2$ so that
		\begin{enumerate}
			\item the point $p(E_0)$, the cubics $h_1,h_2 \in \Q[x,y,z]$, and the nodes of them are defined over $\Q$;
			\item the ideal $(h_1,h_2)$ is prime in $\Q[x,y,z]$;
			\item the points in the intersection is written as the form $[x_i,y_i,1] \in \P^2_\C$, where $x_1,\ldots,x_9$ are Galois conjugate to each other, $y_1,\ldots,y_9$ are Galois conjugate to each other, and the irreducible polynomials of $x_i$ and $y_j$ are not the same.
		\end{enumerate}
		Now, we take $\tau \in \op{Aut}(\C/\Q)$ such that $\tau(x_i) = x_j$. By condition (1), $Y$ is indeed defined over $\Q$, \textit{i.e.} there exists a variety $Y_\Q$ defined over $\Q$ such that $Y = Y_\Q \times_\Q \C$. Consider the automorphism $\tau_Y := {\sf Id}_{Y_\Q} \times \tau$ of $Y$. Then, $\tau_Y$ permutes $E_1,\ldots,E_9$\,(sending $E_i$ to $E_j$), and fixes $E_0$, $B_1$, $B_2$. Thus $\tau_Y$ fixes $D$. In particular,
		\[
			h^p(D+E_j) = h^p(\tau_Y^*(D + E_j)) = h^p( D+ E_i). \qedhere
		\]
	\end{proof}
	By Lemmas~\ref{lem: Perturbation of h_i} and \ref{lem: Permutation lemma}, the main part of the proof reduces to the following statement:
	\begin{proposition}\label{prop: Computable Subcollection}
		There exists $h_1,h_2 \in \C[x,y,z]$ such that
		\[
			\mathcal O_S(D_0^\g),\, \mathcal O_S(D_9^\g),\, \mathcal O_S(D_{10}^\g),\,\mathcal O_S(D_{11}^\g),\,\mathcal O_S(D_{12}^\g)
		\]
		is an exceptional collection in $\Db(S)$.
	\end{proposition}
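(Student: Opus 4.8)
The plan is to convert the semiorthogonality of the five line bundles into finitely many vanishing statements for $h^0$ on $S$, to transport each of these to the rational elliptic surface $Y$ through the degeneration, and finally to reduce them to explicit plane-curve problems that a computer can settle.

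First I would dispose of the diagonal conditions: since $S$ is a blow-up of an Enriques surface, $H^1(\mathcal O_S)=H^2(\mathcal O_S)=0$, so every line bundle on $S$ is automatically an exceptional object, and the content of the proposition is that $H^p(-D_i^\g+D_j^\g)=0$ for all $p$ and all pairs $i>j$ with $i,j\in\{0,9,10,11,12\}$. Because $\chi(-D_i^\g+D_j^\g)=0$ (the Corollary), it suffices to prove $h^0=h^2=0$, and Serre duality turns the $h^2$ into $h^0(K_S+D_i^\g-D_j^\g)$. Thus the proposition reduces to a finite list of vanishings $h^0(\,\cdot\,)=0$, one for each of the ten differences $-D_i^\g+D_j^\g$ and one for each canonical twist $K_S+D_i^\g-D_j^\g$; for the latter I would use that $K_S$ is numerically equivalent to $E_0^\g$ and differs from it only by the $2$-torsion of $\Pic S$, so that these classes also lift to explicit divisors on $Y$ once the torsion is accounted for.

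Next, for each class I would choose a convenient representative $D\in\Pic Y$. As $A_1$ and $A_2$ are contracted by $\pi$, their classes vanish in $\Cl X$, so $D$ may be replaced by $D+m_1A_1+m_2A_2$ without changing $D^\g$; since adding $A_i$ changes $d_i:=\tfrac12(D.A_i)$ by $-2m_i$ while leaving the other intersection number fixed, I can always arrange $d_1,d_2\in\{0,1\}$. For such $d_i$ the correction term $h^0(\mathcal O_{W_i}(d_i))-h^0(\mathcal O_{A_i}(2d_i))=\binom{d_i}{2}$ in the bound (\ref{eq: Semicontinuity on h^0}) vanishes, so the bound collapses to $h^0(D^\g)\le h^0(D)$. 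Pushing forward along $p\colon Y\to\P^2$, the right-hand side becomes the dimension of the linear system of plane curves of degree $\deg_H D$ with the prescribed multiplicities at $p(E_1),\dots,p(E_9),p(E_0),p(B_1),p(B_2)$, whose expected dimension is $0$; the task is to verify that the imposed conditions are in fact independent.

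The genuinely delicate point is to make these linear systems explicitly computable, since the nine base points $p(E_1),\dots,p(E_9)$ are in general defined over a non-solvable extension of $\Q$. Two observations remove the difficulty. Whenever the representative $D$ carries equal coefficients along $E_1,\dots,E_9$ — which, after the adjustment by multiples of $A_1,A_2$, is the case for every symmetric class — the multiplicity-$m$ condition at the nine points is exactly vanishing along the $m$-th power of the complete-intersection ideal $(h_1,h_2)$, and this ideal is defined over $\Q$; and for the classes that single out one $E_i$ (those involving $D_9$), Lemma~\ref{lem: Permutation lemma} lets me fix the index to $i=9$, so I only need $p(E_9)$ itself to be rational. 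I would therefore exhibit explicit rational cubics $h_1,h_2$ for which $p(E_9),p(E_0),p(B_1),p(B_2)$ all lie in $\P^2(\Q)$ (the remaining eight intersection points being Galois-conjugate), encode each multiplicity condition as a saturated ideal over $\Q$, and verify $h^0(D)=0$ in every case by a direct computation in \texttt{Macaulay2} or \texttt{Magma}; Lemma~\ref{lem: Perturbation of h_i} then propagates each vanishing to general $h_1,h_2$. The main obstacle is precisely this last arithmetic-geometric step: producing a single explicit pair $h_1,h_2$ that simultaneously yields nodal cubics, nine distinct intersection points with the required $\Q$-rationality of the distinguished ones, and for which all the resulting $h^0$ actually vanish.
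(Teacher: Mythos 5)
Your proposal is correct and follows essentially the same route as the paper: reduce semiorthogonality to $h^0$-vanishings via $\chi=0$ and Serre duality (with the $2$-torsion twist $(B_1-B_2)^\g$ of $K_S$ accounted for), transport each class to a representative $D\in\Pic Y$ adjusted by multiples of $A_1,A_2$ so that the bound (\ref{eq: Semicontinuity on h^0}) collapses to $h^0(D^\g)\le h^0(D)$, interpret $h^0(D)$ as a plane interpolation problem with $p(E_9),p(E_0),p(B_1),p(B_2)$ rational and the remaining eight points cut out by a $\Q$-defined ideal, verify by Macaulay2, and conclude for general $h_1,h_2$ by Lemmas~\ref{lem: Perturbation of h_i} and \ref{lem: Permutation lemma}. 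The only cosmetic difference is that the paper shortcuts nine of the vanishings using nefness of $B_1^\g$ instead of explicit representatives, which merely reduces the number of computer checks.
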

	\begin{proof}
		We choose $h_1 = (y-z)^2 z - x^3 - x^2 z$ and $h_2 = x^3 - 2xy^2 + 2xyz + y^2z$, and pick $p(E_0) = [4,9,6]$. We denote the number $h^p(-D_i^\g+D_j^\g)$ by $h^p_{i,j}$. First of all, we observe that the divisor $B_1^\g$ is nef, hence any divisor $D^\g \in \Pic S$ with $(D^\g . B_1^\g) < 0$ must have vanishing $h^0$. Also, by Serre duality, $(D_i^\g - D_j^\g \mathbin. B_1^\g) > 0$ implies $h^2_{ij}= 0$. Using this criterion, we get the vanishing of the following numbers:
		\[
			h^2_{9,0},\, h^0_{10,9},\, h^2_{11,0},\, h^2_{11,9},\, h^2_{11,10},\, h^2_{12,0},\, h^2_{12,9},\, h^2_{12,10},\, h^2_{12,11}.
		\]
		In what follows, we prove $h^p_{ij}=0$ for $i,j \in \{ 0,9,10,11,12\}$ with $j<i$ by taking suitable divisor $D \in \Pic Y$ such that $D$ deforms to either $-D_i^\g + D_j^\g$ of $K_S + D_i^\g - D_j^\g$, and by using (\ref{eq: Semicontinuity on h^0}).
		\[
			\begin{array}{c | l }
				\text{result} & \text{choice of }D \\ \hline
				h_{9,0}^0=0 & p^*(H) - E_9 - E_0 - B_1 \\
				h^2_{10,9}=0 & p^*(H) - E_9 + E_0 - B_1 - B_2 \\
				h_{11,0}^0=0 & p^*(5H) - \sum_{i \leq 9} E_i - 3E_0 - 2B_1 - 2B_2 \\ 
				h_{11,9}^0=0 & p^*(4H) - \sum_{i \leq 8} E_i - 2E_0 - B_1 - 2B_2 \\ 
				h_{11,10}^0=0 & p^*(5H) - \sum_{i\leq 9} E_i- 2E_0 - 3B_1 - 2B_2 \\ 
				h_{12,0}^0=0 & p^*(16H) - 4\sum_{i \leq9}E_i - 6E_0 - 6B_1 - 6B_2 \\ 
				h_{12,9}^0=0 & p^*(12H) - 3\sum_{i\leq8} E_i - 2E_9 - 5E_0 - 6B_1 - 4B_2 \\ 
				h_{12,10}^0=0 & p^*(10H) - 3\sum_{i\leq9} E_i -5E_0 - 5B_1 - 6B_2 \\ 
				h_{12,11}^0=0 & \text{same $D$ as in $h_{11,0}^0$}
			\end{array}		
		\]
		For the first two in the table, it is easy to verify $h^0(D)=0$ if the triples $(\, p(E_9), p(E_0), p(B_1)\,) $ and $(\, p(E_9), p(B_1), p(B_2) \, )$ are not colinear. For the rest part of the table, we use computer to find $h^0(D)=0$. The Macaulay2 scripts can be found in \cite{ChoLee:Macaulay2Enriques}.
	\end{proof}
	\begin{proof}[Proof of Theorem~\ref{thm: Main Thm}]
		The only thing remains to prove is $h^p_{i,j} = 0$ for $1 \leq j < i < 9$. This can be easily shown since (\ref{eq: Semicontinuity on h^0}) reads
		\[
			h^0_{i,j} \leq h^0(- E_i + E_j) = 0, \qquad h^2_{i,j} \leq h^0( K_Y + E_1 - E_2 ) = 0.
		\]
		Lemmas~\ref{lem: Perturbation of h_i}, \ref{lem: Permutation lemma} and Proposition~\ref{prop: Computable Subcollection} imply that
		\[
			\mathcal O_S(D_0^\g),\, \mathcal O_S(D_1^\g),\, \ldots,\, \mathcal O_S(D_{12}^\g)
		\]
		is an exceptional collection after a slight perturbation of $h_1$ and $h_2$.
	\end{proof}
{ \small
	\noindent{\bfseries Acknowledgement.} The main part of this work has been done when the author was visiting Universit\"at Bayreuth. He would like to thank the members of Lehrstuhl Mathematik VIII -- Algebraische Geometrie for their hospitality and for providing conducive working atmosphere during his visit.
}
\bibliography{nonminEnriques}
\end{document}